\theoremstyle{plain}
\newtheorem{thm}{Theorem}[section]
\newtheorem{cor}[thm]{Corollary}
\newtheorem{lemma}[thm]{Lemma}
\newtheorem{sch}[thm]{Scholium}
\theoremstyle{definition}
\newtheorem{ques}[thm]{Question}
\DeclareMathOperator{\GL}{GL}
\DeclareMathOperator{\D}{D}
\DeclareMathOperator{\word}{w}
\DeclareMathOperator{\sub}{s}\DeclareMathOperator{\Farb}{F}
\newcommand{\vp}{\varphi}
\newcommand{\nid}{\noindent}
\newcommand{\iny}{\infty}
\newcommand{\abs}[1]{\left\vert#1\right\vert}
\newcommand{\set}[1]{\left\{#1\right\}}
\newcommand{\norm}[1]{\left\vert \left\vert #1\right\vert\right\vert}
\newcommand{\pr}[1]{\left( #1 \right) }
\newcommand{\lra}{\longrightarrow}
\newcommand{\B}[1]{\ensuremath{\mathbf{#1}}}
\newcommand{\Fr}[1]{\ensuremath{\mathfrak{#1}}}
\newcommand{\N}{\mathbf{N}}
\newcommand{\Q}{\mathbf{Q}}
\newcommand{\Z}{\mathbf{Z}}
\newcommand{\C}{\mathbf{C}}
\begin{document}
\bibliographystyle{plain}


\title{\textbf{Extremal behavior of divisibility functions}}
\author{Khalid Bou-Rabee\thanks{University of Michigan, Ann Arbor, MI 48109. E-mail: \tt{khalidb@umich.edu}} \, and D. B. McReynolds\thanks{Purdue University, West Lafayette, IN 47907. E-mail: \tt{dmcreyno@math.purdue.edu}}}
\maketitle

\begin{abstract}
\nid In this short article, we study the extremal behavior $\Farb_\Gamma(n)$ of divisibility functions $\D_\Gamma$ introduced by the first author for finitely generated groups $\Gamma$. These functions aim at quantifying residual finiteness and bounds give a measurement of the complexity in verifying a word is non-trivial. We show that finitely generated subgroups of $\GL(m,K)$ for an infinite field $K$ have at most polynomial growth for the function $\Farb_\Gamma(n)$. Consequently, we obtain a dichotomy for the growth rate of $\log \Farb_\Gamma(n)$ for finitely generated subgroups of $\GL(n,\C)$. We also show that if $\Farb_\Gamma(n) \preceq \log \log n$, then $\Gamma$ is finite. In contrast, when $\Gamma$ contains an element of infinite order, $\log n \preceq \Farb_\Gamma(n)$. We end with a brief discussion of some geometric motivation for this work.
\end{abstract}
\section{Introduction}

A group is \emph{residually finite} if the intersection of all the finite index subgroups is trivial. We continue the study of quantifying residual finiteness, started in \cite{B10} and furthered in \cite{B11}, \cite{BM10,BM11}, and \cite{KM}. This venue is concerned with the asymptotic growth of variants of the \emph{normal divisibility function} $\D_\Gamma\colon \Gamma \to \N$ defined by
\[ \D_\Gamma(g) = \min \{ [\Gamma:\Delta] : \Delta \lhd \Gamma, g \notin \Delta \}. \]
The asymptotic or $L^\iny$--behavior of this function is measured by
\[ \Farb_{\Gamma,X}(n) = \max \{ \D_\Gamma(g) : g \in B^\bullet_{\Gamma,X}(n) \}, \]
where $B^\bullet_{\Gamma,X}(n)$ is the ball of radius $n$ minus the identity for $\Gamma$ with respect to some fixed finite generating set $X$. The function $\Farb_{\Gamma,X}(n)$ is related to both the word growth $\word_{\Gamma,X}(n)$ and normal subgroup growth function $\sub_\Gamma(n)$ via a basic inequality established in \cite{BM10} (see (\ref{8}) below).

It is a classical theorem of Mal'cev \cite{M40} that any finitely generated linear group is residually finite \cite{M40}. In \cite[Theorem 1.1]{BM11}, we proved that for finitely generated linear groups, $\Farb_{\Gamma,X}(n) \preceq (\log(n))^r$ for some $r>0$ if and only if $\Gamma$ is virtually nilpotent; a similar result with restrictions on finite quotients was established in \cite[Theorem 2]{B11} without the linearity assumption. In \cite[Theorem 0.1]{B10} and the substantial generalization \cite[Theorem 1.3]{BK12}, the growth rate of the function $\Farb_{\Gamma,X}(n)$ was established for a large class of arithmetic lattices. Our first main result completes our goal of determining the growth of $\Farb_\Gamma(n)$ for finitely generated linear groups $\Gamma$. Specifically, we prove the following:

\begin{thm}\label{PolyBound}
Let $\Gamma$ be a finitely generated subgroup of $\GL(m,K)$, where $K$ is an infinite field. Then $\Farb_{\Gamma,X}(n) \preceq n^d$ for some $d$ depending only on $m$ and $K$.
\end{thm}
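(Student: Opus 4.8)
The plan is to reduce to the case of a finitely generated subgroup of $\GL(m,R)$ where $R$ is a finitely generated integral domain, then exploit the abundance of residue fields of bounded size. Since $\Gamma$ is finitely generated, all matrix entries of the generators lie in a subring $R \subseteq K$ that is finitely generated as a $\Z$-algebra; thus $\Gamma \leq \GL(m,R)$. A nontrivial word $g \in B^\bullet_{\Gamma,X}(n)$ is a matrix whose entries are polynomials of degree $\lesssim n$ (with coefficients of controlled size) in the entries of the generators. Since $g \neq 1$, some entry of $g - I$ is a nonzero element $r \in R$, and $r$ is an explicit polynomial expression of "height" polynomial in $n$.

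The key step is then to separate $g$ from the identity inside a \emph{finite} quotient of controlled order. For this I would pass to a maximal ideal $\Fr{m} \lhd R$ with $r \notin \Fr{m}$, so that the reduction $R \to R/\Fr{m}$ is a finite field $\BB{F}_q$ with $q$ bounded in terms of the height of $r$ (hence polynomially in $n$) — this is the standard fact that a finitely generated domain has a cofinal family of finite residue fields of polynomially bounded size, and that a nonzero element of bounded height survives in one of small size. The induced homomorphism $\Gamma \to \GL(m,\BB{F}_q)$ does not kill $g$, so $\D_\Gamma(g) \leq \abs{\GL(m,\BB{F}_q)} \leq q^{m^2} \preceq n^{m^2 \cdot C}$ for a constant $C = C(R)$ coming from the height bound; absorbing $C$ and the dependence on $R$ (equivalently on $m$ and $K$, since $R$ can be taken to depend only on these and on $X$, and changing $X$ only rescales $n$) gives $\Farb_{\Gamma,X}(n) \preceq n^d$.

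Two bookkeeping points need care. First, one must be honest about how the height of the entry $r$ grows: writing $g$ as a product of at most $n$ generators and their inverses, the entries of $g$ are $\Z$-polynomials of degree at most $n$ in the (finitely many) entries of $X^{\pm 1}$, and along the way one may need to clear denominators, so $r$ is better viewed as a numerator in $R$ after multiplying through by a fixed element; in any case its height (bit-length of coefficients plus degree) is $O(n)$, which is all that is used. Second, one needs the quantitative version of "small residue field": in $\Z$ itself a nonzero integer $N$ fails to vanish mod some prime $p \leq O(\log N)$ actually a prime $p = O(\log\abs N)$ suffices, while in the general finitely generated domain $R$ one uses a Noether normalization $\Z[t_1,\dots,t_k] \hookrightarrow R$ finite, reduces the problem of finding a small residue field to the polynomial ring, and there specializes the $t_i$ to small integers and then reduces mod a small prime, controlling the degree of the field extension by the degree of the finite extension $R/\Z[t_1,\dots,t_k]$.

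The main obstacle is precisely this last quantitative statement — producing, for a nonzero $r \in R$ of height $h$, a maximal ideal avoiding $r$ with residue field of size polynomial in $h$ (with the exponent depending only on $R$, equivalently on $m$ and $K$). Over $\Z$ this is elementary, but over a general finitely generated domain it requires a uniform effective Nullstellensatz-type estimate through Noether normalization; once that is in hand the rest of the argument is routine. I expect the bound $d$ to come out as roughly $m^2$ times the transcendence-degree/extension-degree data of $R$, which indeed depends only on $m$ and $K$.
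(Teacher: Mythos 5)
You have the right overall architecture, and it is essentially the paper's: restrict to a finitely generated coefficient ring, observe that a nontrivial $\gamma$ of word length $n$ has some entry of $\gamma - I$ that is a nonzero element of degree $O(n)$ and coefficient size $e^{O(n)}$, and then separate it in a finite residue field of size polynomial in $n$. But the step you yourself call ``the main obstacle'' --- given a nonzero $r\in R$ of height $h$, produce a maximal ideal avoiding $r$ whose residue field has size polynomial in $h$ --- is exactly where all of the paper's real work happens, and you have asserted it rather than proved it. It is not an off-the-shelf ``standard fact'': only the qualitative statement (residue fields at maximal ideals are finite, and some maximal ideal avoids $r$) is classical; the quantitative version is the content of the paper's Lemmas \ref{LemmaZ}, \ref{LemmaA} and \ref{LemmaB}, which reduce a nonzero $s$-variable polynomial of degree $d$ to a one-variable polynomial via the substitution $x_i\mapsto x^{n_i}$ with $n_i\le d^{2s}$, and then in characteristic zero evaluate at an integer at most $\deg+1$ and reduce modulo a prime of size logarithmic in the resulting value, while in characteristic $p$ they find an irreducible polynomial of degree $O_s(\log d)$ not dividing it, using Gauss's count of irreducible polynomials.

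Your sketch of the missing step also does not work as stated in positive characteristic, which you must handle since $K$ is only assumed infinite (e.g.\ $K=\mathbf{F}_p(t)$): after Noether normalization you propose to ``specialize the $t_i$ to small integers and then reduce mod a small prime,'' but over $\mathbf{F}_p$ the available ``small integers'' are just the $p$ elements of the prime field, and a nonzero polynomial such as $t_1^p-t_1$ vanishes at all of them; one must instead use residue fields $\mathbf{F}_p[x]/(h)$ with $h$ irreducible of small degree, which is precisely Lemma \ref{LemmaB}. Two further points in your reduction need more than you give them: a finitely generated $\Z$-algebra domain is in general finite over a polynomial ring only after inverting an element (so the normalization has to be combined carefully with your denominator-clearing), and to transport the nonvanishing of $r$ down to the polynomial ring through the finite extension you would need to control a norm or resultant of $r$, whose height must in turn be bounded; the paper avoids both issues by applying restriction of scalars to the matrix entries at the outset, so that $\Gamma$ sits in $\GL(M,S')$ with $S'$ a localized polynomial ring, and by scaling with a fixed element $\Phi(T)$ whose image under the chosen specialization is automatically a unit. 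So your strategy is sound and parallels the paper's, but as written there is a genuine gap at the quantitative core of the argument.
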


The chief difficulty in proving Theorem \ref{PolyBound} versus what was done in \cite[Theorem 0.1]{B10} and \cite[Theorem 1.3]{BK12} (also the general methods used in \cite{BM10}) is the possibility that the field of coefficients for the group is transcendental over $\Q$ or $\mathbf{F}_p$. Geometrically, this issue is dealt with via a deformation of the representation in the variety of representations to a representation with coefficients in $\overline{\Q}$ or $\overline{\mathbf{F}_p}$ since such a point cannot be locally rigid by work of Weil; the resulting representation need not be faithful but a fixed non-trivial word will have non-trivial image generically. Algebraically, this deformation equates to employing evaluation maps on function fields to the field of coefficients. We will take the geometrically less intuitive algebraic approach here as it is better suited for quantitative analysis. Combining Theorem \ref{PolyBound} with \cite[Theorem 0.2]{B10} and \cite[Theorem 1.1]{BM11}, we have the following dichotomy which was a main goal of the study of the function $\Farb_{\Gamma,X}(n)$.

\begin{cor}\label{CorPolyBound}
Let $\Gamma$ be a finitely generated subgroup of $\GL(m,\C)$. Then there exists a positive integer $b$ such that 
\begin{itemize}
\item[(i)]
$\Farb_{\Gamma,X}(n) \preceq (\log n)^b$, or
\item[(ii)]
$\Farb_{\Gamma,X}(n) \preceq n^b$.
\end{itemize}
Moreover, (i) holds if and only if $\Gamma$ is virtually nilpotent.
\end{cor}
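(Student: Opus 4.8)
The plan is to establish Corollary~\ref{CorPolyBound} by assembling the three results cited—Theorem~\ref{PolyBound} above, \cite[Theorem 0.2]{B10}, and \cite[Theorem 1.1]{BM11}—into a clean dichotomy. The starting observation is that Theorem~\ref{PolyBound} applies verbatim to $\Gamma \leq \GL(m,\C)$ since $\C$ is an infinite field: it already gives the polynomial bound $\Farb_{\Gamma,X}(n) \preceq n^d$ in all cases. So alternative~(ii) is always available with $b = d$. The content of the corollary is therefore the \emph{refinement} to a logarithmic bound precisely when $\Gamma$ is virtually nilpotent, together with the observation that these two alternatives, though not mutually exclusive as stated, are jointly exhaustive and the finer one is governed exactly by virtual nilpotence.

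First I would invoke \cite[Theorem 1.1]{BM11}, which asserts that for a finitely generated linear group $\Gamma$ one has $\Farb_{\Gamma,X}(n) \preceq (\log n)^r$ for some $r > 0$ if and only if $\Gamma$ is virtually nilpotent. This immediately yields the ``if'' direction of the ``moreover'' clause: when $\Gamma$ is virtually nilpotent, take $b = \lceil r \rceil$ (or any integer $\geq r$) and conclude~(i). Second, for the ``only if'' direction, I would argue contrapositively: if $\Gamma$ is not virtually nilpotent, then \cite[Theorem 1.1]{BM11} tells us $\Farb_{\Gamma,X}(n)$ is \emph{not} bounded above by any power of $\log n$, so~(i) fails for every choice of $b$, whence only~(ii) can hold—and it does hold, by Theorem~\ref{PolyBound}. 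One should also confirm the statement is asymptotic-equivalence-robust: the $\preceq$ relation is the standard ``dominated up to affine rescaling of the argument and a multiplicative constant,'' which is a quasi-isometry invariant and in particular independent of the finite generating set $X$, so the change-of-generating-set remarks from \cite{BM10} (or the original definitions) make the statement well-posed.

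The remaining point is purely bookkeeping: produce a single integer $b$ that works simultaneously for whichever alternative applies. Since for a fixed $\Gamma$ exactly one of ``$\Gamma$ virtually nilpotent'' or ``$\Gamma$ not virtually nilpotent'' holds, one sets $b = \max\{\lceil r \rceil, d\}$ where $r$ comes from \cite[Theorem 1.1]{BM11} in the virtually nilpotent case (and is irrelevant otherwise) and $d$ comes from Theorem~\ref{PolyBound}; then~(i) holds with exponent $b$ in the virtually nilpotent case and~(ii) holds with exponent $b$ in the other case. The ``if and only if'' for~(i) then reads off: (i) holds (for some, equivalently this, $b$) exactly when $\Gamma$ is virtually nilpotent, by \cite[Theorem 1.1]{BM11}.

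I do not anticipate a genuine obstacle here—this corollary is a formal consequence of results already in hand. The only mild subtlety worth a sentence in the write-up is the logical structure: (i) and (ii) are not presented as mutually exclusive (indeed a virtually nilpotent group satisfies both), so the force of the corollary is the \emph{completeness} of the list plus the sharp characterization of when the logarithmic bound is attainable; I would phrase the proof so that this is transparent, lest a reader expect a trichotomy or a strict separation. If \cite[Theorem 1.1]{BM11} were only stated for $K$ of characteristic zero or with a finiteness hypothesis on quotients, I would need to check its applicability to arbitrary $\Gamma \leq \GL(m,\C)$, but as quoted in the introduction it is exactly the characteristic-zero-compatible statement we need, so no such gap arises.
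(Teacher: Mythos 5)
Your proposal is correct and matches the paper's (implicit) argument: the corollary is obtained exactly by combining Theorem \ref{PolyBound} (giving the polynomial bound over the infinite field $\C$) with the characterization in \cite[Theorem 1.1]{BM11} of polylogarithmic growth via virtual nilpotence, which is all the paper does. Your bookkeeping for a single integer $b$ and the remark that (i) and (ii) are not mutually exclusive are fine and do not change the substance.
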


Our second main result concerns the growth rate of $\Farb_{\Gamma,X}(n)$ and how it relates to the threshold between finite and infinite groups. It is straightforward to see that for an infinite group, $\word_{\Gamma,X}(n) \geq n$. However, the existence of infinite simple groups precludes such a growth threshold result for subgroup growth. As the function $\Farb_{\Gamma,X}$ relates these two functions, it is not clear if such a growth threshold result should hold for $\Farb_{\Gamma,X}$. That said, our final result exhibits that $\Farb_{\Gamma,X}$ does enjoy a growth threshold. Specifically,

\begin{thm}\label{FiniteTheorem}
Let $\Gamma$ be a finite generated group. If $\Farb_{\Gamma,X}(n) \preceq \log \log n$, then $\Gamma$ is finite.
\end{thm}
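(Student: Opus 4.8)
The plan is to argue by contradiction: assuming $\Gamma$ is infinite with $\Farb_{\Gamma,X}(n) \preceq \log\log n$, I will force the word growth $\word_{\Gamma,X}(n)$ below the linear lower bound enjoyed by every infinite finitely generated group. First note the hypothesis already yields residual finiteness: $\Farb_{\Gamma,X}$ is non-decreasing, so were it ever $\iny$ it would be $\iny$ on a cofinite set of radii, contradicting $\Farb_{\Gamma,X}(n)\preceq\log\log n$; hence $\D_\Gamma(g)<\iny$ for all $g\neq 1$ and the basic inequality $(\ref{8})$ is in force. Fix $C$ and $n_0$ with $\Farb_{\Gamma,X}(n)\le C\log\log n$ for $n\ge n_0$ (the difference between $\log\log n$ and $\log\log(Cn)$ being immaterial here), and let $d$ be the number of generators in $X$.

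I will use three inputs. $(a)$ As $\Gamma$ is infinite, $\word_{\Gamma,X}(n)\ge n+1$, since the growth function of an infinite finitely generated group is strictly increasing ($B_{\Gamma,X}(n)=B_{\Gamma,X}(n+1)$ would force $\Gamma=B_{\Gamma,X}(n)$ by a geodesic argument). $(b)$ The basic inequality $(\ref{8})$ bounds $\word_{\Gamma,X}(n)$ above by a quantity of the form $\Farb_{\Gamma,X}(cn)^{\sub_\Gamma(\Farb_{\Gamma,X}(cn))}$ for an absolute constant $c$: morally, $B_{\Gamma,X}(n)$ injects into $\Gamma/N$ for $N$ the intersection of all normal subgroups of index at most $\Farb_{\Gamma,X}(cn)$, and $\Gamma/N$ embeds into the product of the corresponding finite quotients. $(c)$ Normal subgroup growth is subexponential; explicitly $\sub_\Gamma(m)\le\exp(O_d((\log m)^3))$, because a normal subgroup of index at most $m$ is the kernel of an epimorphism onto a group $Q$ with $\abs{Q}\le m$, there are only $\exp(O((\log m)^3))$ isomorphism types of such $Q$ (the enumeration of finite groups, driven by the $p$-group count), and each $Q$ receives at most $\abs{Q}^d\le m^d$ homomorphisms from $\Gamma$.

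Combining these, set $F=\Farb_{\Gamma,X}(cn)$, so $F=O(\log\log n)$ for $n$ large. Then
\[ \log(n+1)\;\le\;\log\word_{\Gamma,X}(n)\;\le\;\sub_\Gamma(F)\cdot\log F\;\le\;\exp\!\big(O((\log\log\log n)^3)\big)\cdot O(\log\log\log n). \]
Writing $u=\log\log n$, the right-hand side is $\exp(O((\log u)^3))=\exp(o(u))=(\log n)^{o(1)}$, while the left-hand side is $\sim\log n$; for $n$ large this is absurd, so $\Gamma$ is finite.

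The decisive point — and the one to get right — is $(c)$: what is needed is a genuinely subexponential bound $\log\sub_\Gamma(m)=o(m)$. A softer estimate such as $\sub_\Gamma(m)\le\exp(O_d(m\log m))$ (for instance from counting \emph{all} index-$\le m$ subgroups) is already too weak, overwhelming $\log n$ once $m\asymp\log\log n$; it is precisely the near-polynomial normal subgroup growth of finitely generated groups that makes the threshold fall at $\log\log n$, with the outer logarithm cancelling the exponential in $(\ref{8})$ and the subexponential bound then beating the residual $\log\log n$. As a coda, when $\Gamma$ contains an element $g$ of infinite order one can avoid $(c)$: the powers $g^{\LCM(1,\dots,m)}$ have $\D_\Gamma>m$ yet lie in a ball of radius $e^{(1+o(1))m}$, giving the stronger conclusion $\log n\preceq\Farb_{\Gamma,X}(n)$; the argument above is what additionally handles finitely generated infinite torsion groups.
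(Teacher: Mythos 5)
Your proof is correct and takes essentially the same route as the paper: assume $\Gamma$ infinite, use the linear lower bound on word growth, the basic inequality (\ref{8}) from \cite{BM10}, and a strongly subexponential bound on normal subgroup growth to contradict $\Farb_{\Gamma,X}(n) \preceq \log\log n$. The only difference is cosmetic: you re-derive $\log \sub_\Gamma(m) = O_d((\log m)^3)$ from the enumeration of finite groups, whereas the paper cites Lubotzky--Segal for the sharper $\log \sub_\Gamma(m) \preceq (\log m)^2$; either bound suffices for the contradiction.
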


It was established in \cite[Lemma 1.1, Theorem 2.2]{B10} that if $\Gamma$ contains an element of infinite order, than $\log n \preceq \Farb_{\Gamma,X}(n)$. We give a slight improvement of Theorem \ref{FiniteTheorem} (see Scholium \ref{FiniteImprovement}) in Section \ref{FiniteSection}. The proof of Theorem \ref{FiniteTheorem} uses the above mentioned basic inequality relating $\Farb_{\Gamma,X}(n)$ with the word growth function $\word_{\Gamma,X}(n)$ and the normal subgroup growth function $\sub_\Gamma(n)$ established earlier in \cite[Equation 1]{BM11}.\smallskip\smallskip

\nid 
We conclude with some geometric motivation for the study of the functions $\D_\Gamma$, $\Farb_{\Gamma,X}$, and some related functions from \cite{BM11}.

\paragraph{\textbf{Acknowledgements.}} 
We are immensely grateful to the excellent referee for pointing out errors in an earlier draft of this paper. We thank Martin Kassabov for asking us a question that led us to find Theorem \ref{FiniteTheorem}. The first author was partially supported by NSF RTG grant DMS-0602191. The second author was partially supported by NSF DMS-1105710.

\paragraph{\textbf{Notation and Conventions.}} We write $f \preceq g$ to mean that there exists $C > 0$ such that $f(n) \leq C(g(Cn))$. If $f \preceq g$ and $g \preceq f$, then we write $f \approx g$. The growth of $\Farb_{\Gamma,X}(n)$ is, up to this equivalence, independent of $X$ (see \cite[Lemma 1.1]{B10}). Hence, we typically drop $X$ from the notation.

\section{A short algebraic excursion}

In the proof of Theorem \ref{PolyBound}, we require some results on divisibility functions for rings. This section contains a pair of lemmas for just this task. Throughout, $S$ will be either the ring $\Z[T]$ or $\mathbf{F}_p[T]$, where $T = \set{x_1,\dots,x_s}$ is a finite set of indeterminants. The divisibility function for $S$
\[ \D_S\colon S - \set{0} \lra \N \]
is given by
\[ \D_S(f) = \min\set{\abs{S/\Fr{p}}~:~ f \ne 0 \mod \Fr{p}, \:S/\Fr{p} \text{ is a field}}. \]
The next few results provide the needed control of this function in the characteristic zero and positive characteristic cases. 
We start with a lemma that allows us to reduce to the single variable case:

\begin{lemma} \label{LemmaZ}
Let $S = R[T]$ where $R = \mathbf{F}_p$ or $R = \Z$ and $T = \{ x_1, \ldots, x_s\}$.
Let $f \in S$ be a polynomial that is nonzero and of degree $d$.
Then there exists a sequence $\{n_i \}_{i=1}^s$ taking values in $\{ 0, 1, \ldots, d^{2s} \}$ such that
$$
f(x^{n_1}, \ldots, x^{n_s}) \neq 0.
$$
\end{lemma}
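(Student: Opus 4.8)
The plan is to substitute the monomials $x_i \mapsto x^{n_i}$ for a single indeterminate $x$ and exponents $n_i$ chosen so that distinct monomials appearing in $f$ remain distinct after the substitution; then no cancellation can occur and the image is nonzero. Concretely, writing $f = \sum_{\alpha} c_\alpha x_1^{\alpha_1}\cdots x_s^{\alpha_s}$ with each $\alpha_i \le d$, the substitution sends the monomial indexed by $\alpha$ to $x^{\langle n, \alpha\rangle}$ where $\langle n,\alpha\rangle = \sum_i n_i \alpha_i$. So it suffices to find $n = (n_1,\dots,n_s)$ with entries in $\{0,1,\dots,d^{2s}\}$ such that the linear form $\alpha \mapsto \langle n, \alpha\rangle$ is injective on the (finite) set of exponent vectors occurring in $f$ — or more cheaply, injective on the whole cube $\{0,1,\dots,d\}^s$, which is a condition independent of $f$.

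The natural choice is the base-$(d+1)$ trick: take $n_i = (d+1)^{i-1}$. Then $\langle n,\alpha\rangle = \sum_i \alpha_i (d+1)^{i-1}$ is exactly the base-$(d+1)$ expansion of an integer with digits $\alpha_i \in \{0,\dots,d\}$, hence injective on $\{0,\dots,d\}^s$. The only thing to check is the size bound: we need $n_s = (d+1)^{s-1} \le d^{2s}$. For $d \ge 2$ this is immediate since $(d+1)^{s-1} \le (d^2)^{s-1} \le d^{2s}$; the degenerate cases $d = 0$ (then $f$ is a nonzero constant and $n_i = 0$ works) and $d = 1$ (where $(d+1)^{s-1} = 2^{s-1} \le 1^{2s}$ fails the stated bound as written, so one either notes $2^{s-1}\le d^{2s}$ is false and instead uses that the bound $d^{2s}$ should be read with the convention that for $d=1$ we may take $n_i \in \{0,1\}$ and argue directly, or simply observe $2^{s-1} \le 3^{s} $ — here I would just handle $d \le 1$ separately) need a one-line remark. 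I do not expect any real obstacle here; the proof is essentially the observation that a single monomial substitution can separate bounded-degree monomials, plus an elementary inequality. The one point requiring a little care is making sure the chosen exponents genuinely lie in the claimed range $\{0,1,\dots,d^{2s}\}$ and dealing with the small-$d$ boundary cases, but this is routine.

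If one wants the exponents to depend only on $f$ and potentially be smaller, an alternative is a probabilistic/counting argument: choosing $n_i$ uniformly in $\{0,\dots,N\}$, the form $\langle n,\cdot\rangle$ fails to separate a fixed pair $\alpha \ne \beta$ of exponent vectors of $f$ with probability $O(1/N)$, and there are at most $\binom{|f|}{2}$ pairs where $|f|$ is the number of terms (at most $(d+1)^s$); taking $N$ polynomial in these quantities gives a valid choice by the union bound. But the deterministic base-$(d+1)$ construction is cleaner and suffices for the stated bound, so that is the route I would write up.
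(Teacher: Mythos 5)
Your proposal is correct in substance, and it takes a genuinely different route from the paper's. You use the Kronecker substitution $x_i \mapsto x^{(d+1)^{i-1}}$: since every exponent vector of $f$ lies in $\{0,\dots,d\}^s$, the map $\alpha \mapsto \sum_i \alpha_i (d+1)^{i-1}$ is injective (base-$(d+1)$ digits), so distinct monomials of $f$ go to distinct powers of $x$, no cancellation occurs, and the image is nonzero; for $d \ge 2$ the exponents satisfy $(d+1)^{s-1} \le d^{2s-2} \le d^{2s}$, which is in fact sharper than the stated range. The paper instead runs a double induction on $(s,d)$, writing $f = (h_0 + x_1 h_1)x_1^k$ with $h_0 \in R[x_2,\dots,x_s]$ nonzero, choosing $n_2,\dots,n_s$ for $h_0$ inductively, and taking $n_1 = d^{2s}$ so large that the term $x^{d^{2s}}h_1(\dots)$ cannot cancel $h_0(x^{n_2},\dots,x^{n_s})$, whose degree is at most $d^{2s-1}$. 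Your argument is more direct (no induction) and yields smaller exponents; its only cost is that $d \le 1$ must be treated separately, which you correctly flag but should actually write out: $d=0$ is trivial, and for $d=1$ pick a variable $x_j$ whose linear coefficient is nonzero, set $n_j = 1$ and all other $n_i = 0$, so the image has nonzero coefficient on $x$ and the exponents lie in the allowed set $\{0,1\}$. (Drop the remark about $2^{s-1} \le 3^s$, which is beside the point, since the permitted range at $d=1$ is exactly $\{0,1\}$.) It is worth noting that the paper's induction also implicitly assumes $d \ge 2$: its strict inequality $d^{2s-1} < d^{2s}$ fails at $d=1$ (for $f = x_2 - x_1$ the recipe can return $n_1 = n_2 = 1$, giving the zero polynomial), so your explicit handling of the small-degree cases is, if anything, the more careful treatment. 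Finally, your probabilistic aside, as sketched, needs $N$ on the order of $(d+1)^{2s}$ by the union bound and so does not obviously fit inside $d^{2s}$; the deterministic substitution is the right version to write up.
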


\begin{proof}
We prove this by complete 2-dimensional induction on $s$ and $d = \deg(f)$.
The base cases where $s = 1$ or $d = 0$ are trivial.
For the inductive step, let $f$ be a degree $d$ polynomial in $R[x_1, \ldots, x_s]$ and write
$$
f(x_1, \ldots, x_s) = (h_0 + x_1 h_1) x_1^k,
$$
where $h_0 \in R[x_2, \ldots, x_s]$ is nonzero, $h_1 \in R[x_1, \ldots, x_s]$, and $k$ a nonnegative integer.
If $k \neq 0$, we are done by the inductive hypothesis applied to $(h_0 + x_1 h_1)$, which has degree $< d$.
We assume, thusly, that $k = 0$.
Since $h_0$ is nonzero and in $R[x_2, \ldots, x_s]$ (note the variables start at $x_2$), there exists, by the inductive hypothesis, $n_2, \ldots, n_s \in \{ 0, 1, \ldots, d^{2s-2}\}$ such that
$$
h_0(x^{n_2}, \ldots, x^{n_s}) \neq 0.
$$
We are done if $h_1( x^{d^{2s}}, x^{n_2}, \ldots, x^{n_s}) = 0$ as then
$$
f(x^{d^{2s}}, x^{n_2}, \ldots, x^{n_s})  = h_0(x^{n_2}, \ldots, x^{n_s}) \neq 0.
$$
Otherwise $h_1( x^{d^{2s}}, x^{n_2}, \ldots, x^{n_s})$ is nonzero. In this case, we have
$$
\deg( h_0(x^{n_2}, \ldots, x^{n_s}) ) \leq d^{2s-1} < d^{2s} \leq \deg (x^{d^{2s}} h_1( x^{d^{2s}}, x^{n_2}, \ldots, x^{n_s})).
$$
Thus, 
$$
f(x^{d^{2s}}, x^{n_2}, \ldots, x^{n_s}) \neq 0
$$
as desired.
\end{proof}

Now we handle the characteristic zero case.

\begin{lemma} \label{LemmaA}
Let $S=\Z[x_1, \ldots, x_s]$, $f \in S$ with $\deg(f) \leq d$, and assume that $f$ is a nonzero function.
Set $g \in \Z[x]$ to be a single variable polynomial obtained by Lemma \ref{LemmaZ} and $\{ a_i \}$ be given by $g(x) = a_0 + a_1 x + \cdots + a_rx^r.$ Then for any $\epsilon > 0$,
\[ \D_S(f) \leq C \pr{\log\pr{\max\set{\abs{a_j}}} + d^{(2s+1)+\epsilon}}, \]
where $C$ depends only on $\Z$ and $s$.
\end{lemma}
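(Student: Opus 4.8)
The plan is to reduce the multivariable divisibility problem to a single-variable one via Lemma~\ref{LemmaZ}, and then bound $\D_{\Z[x]}(g)$ by finding a prime $p$ and a point in $\mathbf{F}_p$ at which $g$ does not vanish, choosing $p$ roughly as small as the constraints allow. First, apply Lemma~\ref{LemmaZ} to obtain exponents $n_1,\dots,n_s \in \{0,1,\dots,d^{2s}\}$ so that $g(x) = f(x^{n_1},\dots,x^{n_s})$ is a nonzero element of $\Z[x]$. Note that $\deg(g) \le d \cdot \max_i n_i \le d^{2s+1}$, and the coefficients $a_j$ of $g$ are sums of the coefficients of $f$ grouped by total weighted degree, so $\max_j |a_j|$ is controlled by the coefficients of $f$; in any case we carry $\log(\max_j|a_j|)$ as a parameter. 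The key point is that any field quotient $\Z[x]/\Fr{p}$ on which $g$ is nonzero gives an upper bound for $\D_S(f)$, since the composite $S \to \Z[x] \to \Z[x]/\Fr{p}$ (the first map being $x_i \mapsto x^{n_i}$) sends $f$ to a nonzero element, and $\abs{\Z[x]/\Fr{p}}$ bounds $\D_S(f)$ from above.

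Next I would produce such a quotient explicitly. Reducing mod a prime $p$, the polynomial $\bar g \in \mathbf{F}_p[x]$ is nonzero provided $p \nmid \gcd(a_0,\dots,a_r)$, in particular provided $p \nmid a_{j_0}$ for some fixed nonzero coefficient $a_{j_0}$; by the prime number theorem (or Chebyshev/Bertrand-type estimates) there is such a prime $p$ with $p = O(\log(\max_j|a_j|))$, since a number $N$ has at most $O(\log N / \log\log N)$ prime divisors. Having fixed $p$, I need a point $\alpha$ in some finite field of characteristic $p$ at which $\bar g$ does not vanish: $\bar g$ has at most $\deg(g) \le d^{2s+1}$ roots in $\overline{\mathbf{F}_p}$, so it is nonzero at some $\alpha \in \mathbf{F}_{p^k}$ as soon as $p^k > d^{2s+1}$, i.e. for $k = O(\log(d^{2s+1})/\log p) = O(\log d)$. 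Evaluation at such an $\alpha$ gives a field $\mathbf{F}_{p^k}$ with $p^k \le p \cdot d^{2s+1}$ on which $g$, hence $f$, is nonzero, so
\[
\D_S(f) \;\le\; p^k \;\le\; p \cdot d^{2s+1} \;\le\; C\bigl(\log(\max_j|a_j|)\bigr)\, d^{2s+1}.
\]

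Finally, to massage this into the stated additive form $C(\log(\max_j|a_j|) + d^{(2s+1)+\epsilon})$, I would split into cases according to whether $\log(\max_j|a_j|)$ is larger or smaller than $d^{\epsilon}$: when it is smaller, the product $\log(\max_j|a_j|)\cdot d^{2s+1}$ is at most $d^{(2s+1)+\epsilon}$; when it is larger, one instead chooses the field to have characteristic $p$ not dividing the leading coefficient and absorbs $d^{2s+1}$ into $\log(\max_j|a_j|)$ after checking $d^{2s+1} \le (\log(\max_j|a_j|))^{O(1)}$ is \emph{not} automatic — so more carefully, use $xy \le x^{1+\epsilon} + y^{(1+\epsilon)/\epsilon}$ type Young-style inequalities, or simply observe $\log(\max|a_j|)\cdot d^{2s+1} \le (\log(\max|a_j|))^2 + d^{2(2s+1)}$ and note the exponent can be brought down to $(2s+1)+\epsilon$ by instead taking $k$ minimal and tracking constants. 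The main obstacle I anticipate is precisely this last bookkeeping: getting the clean additive split with exponent $(2s+1)+\epsilon$ rather than a cruder multiplicative or larger-exponent bound, which requires being slightly careful about how the prime $p$ is chosen (small prime avoiding one specific coefficient) and about the interplay between $\log(\max_j|a_j|)$ and powers of $d$. The genuinely mathematical content — reduction to one variable, counting roots over finite fields, and existence of a small suitable prime — is otherwise routine.
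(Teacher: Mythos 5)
Your reduction to one variable via Lemma \ref{LemmaZ} and the observation that any field quotient of $\Z[x]$ in which $g$ survives bounds $\D_S(f)$ both match the paper. The gap is in the second half: your construction (pick a small prime $p \nmid a_{j_0}$, then pass to $\mathbf{F}_{p^k}$ with $p^k > \deg(g)$ to find a non-root) inherently yields the \emph{multiplicative} bound $\D_S(f) \leq C\log\pr{\max\abs{a_j}}\cdot d^{2s+1}$, and none of the proposed massages turn this into the stated \emph{additive} bound $C\pr{\log\pr{\max\abs{a_j}} + d^{(2s+1)+\epsilon}}$, which is linear in $\log\max\abs{a_j}$. Your own case analysis breaks exactly where you flag it: when $\log\max\abs{a_j}$ is large the gcd of the coefficients can be divisible by all small primes, so $p$ really can be of size comparable to $\log\max\abs{a_j}$, and the Young-type fallback $\log\max\abs{a_j}\cdot d^{2s+1}\leq (\log\max\abs{a_j})^2 + d^{2(2s+1)}$ is \emph{not} dominated by the stated right-hand side (take $\log\max\abs{a_j}$ much larger than any power of $d$: then $(\log\max\abs{a_j})^2$ swamps $\log\max\abs{a_j} + d^{(2s+1)+\epsilon}$). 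Even the sharper variant of your argument, demanding a prime $p > d^{2s+1}$ avoiding the gcd, picks up an extra logarithmic factor on the $\log\max\abs{a_j}$ term, so the order of operations ``choose the prime first, then find a point'' cannot deliver the lemma as stated.

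The paper's proof reverses the order, and that is the missing idea: since $g$ has at most $r \leq d^{2s+1}$ roots, evaluate it at an integer $\ell \leq r+1$ with $i = g(\ell) \neq 0$; then $\abs{i} \leq (r+1)\ell^{r}\max\abs{a_j}$, so $\log\abs{i} \leq \log\max\abs{a_j} + d^{2s+1}\log(d^{2s+1}+1) + \log(d^{2s+1}+1)$, which is additive, and $\D_\Z(i) \preceq \log\abs{i}$ (\cite[Theorem 2.2]{B10}) produces a prime $p \nmid i$ of that size; the composite $S \to \Z[x] \to \Z \to \mathbf{F}_p$ keeps $f$ nonzero and gives the claimed bound. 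In fairness, your multiplicative bound would still suffice for the downstream use in Theorem \ref{PolyBound} (it only worsens the polynomial exponent), but it does not prove Lemma \ref{LemmaA} as stated; to close the gap, replace the ``reduce mod $p$, then extend the field'' step with the ``evaluate at a small integer, then reduce mod a prime not dividing the value'' step.
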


\begin{proof}
Let $r = \deg (g)$ with $r \leq d^{2s+1}$. $g$ has at most $r$ roots and so there exists $\ell \in \N$ with $\ell \leq r+1$ such that $g(\ell) \neq 0$. Setting $g(\ell)=i$ and $A=\max\set{\abs{a_j}}$, note that $i \in \Z$ and $\abs{i} \leq (r+1)\ell^rA$. Since $\Farb_\Z(i) \approx \log(i)$ (see \cite[Theorem 2.2]{B10}), we see that
\[ \D_\Z(i) \leq C_0 \log((r+1)\ell^rA), \]
where $C_0$ is a constant that only depends on $\Z$. This inequality gives
\begin{align*}
\D_\Z(i) &\leq C_0 \pr{\log A + r\log \ell + \log(r+1)} \\
&\leq C_0\pr{\log A +d^{2s+1} \log(d^{2s+1}+1) + \log(d^{2s+1}+1)} \\
&\leq C_1 (\log A + d^{2s+1} \log d) < C ( \log A + d^{(2s+1)+\epsilon}),
\end{align*}
where $C_1$ and $C$ only depends on $\Z$ and $s$. In total, we have the sequence of ring homomorphisms
\[ S \lra \Z[x] \lra \Z \lra \B{F}_p \]
where
\[ f \longmapsto g \longmapsto i \longmapsto \overline{i} \ne 0. \]
In particular,
\[ \D_S(f) \leq \D_\Z(i) < C( \log A + d^{(2s+1)+\epsilon}). \]
\end{proof}

We next handle the positive characteristic case.

\begin{lemma} \label{LemmaB}
Let $S = \mathbf{F}_p[x_1, \ldots x_s]$, $f \in S$, $\deg(f) +1 \leq d$, and assume that $f$ is nonzero. Then
\[ \D_S (f) \leq d ^{C \log(p)}, \]
where $C$ depends only on $s$.
\end{lemma}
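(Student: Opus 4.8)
The plan is to follow the template of Lemma \ref{LemmaA}, with the role of the estimate $\D_\Z(i) \approx \log i$ now played by a count of irreducible polynomials over $\mathbf{F}_p$. First I would invoke Lemma \ref{LemmaZ}: applied to $f$ (whose degree is at most $d-1$) it produces exponents $n_1,\dots,n_s \in \{0,1,\dots,(d-1)^{2s}\}$ with $g(x) \bdef f(x^{n_1},\dots,x^{n_s}) \in \mathbf{F}_p[x]$ nonzero, and then $r \bdef \deg g \le (d-1)\cdot(d-1)^{2s} = (d-1)^{2s+1} < d^{2s+1}$. Since $S \to \mathbf{F}_p[x]$, $f \mapsto g$, is a ring homomorphism, it suffices to bound $\D_{\mathbf{F}_p[x]}(g)$.

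This one-variable step is where positive characteristic genuinely differs from Lemma \ref{LemmaA}: one cannot in general evaluate $g$ at a point of $\mathbf{F}_p$ without killing it (think of $g = x^p - x$), so one must pass to a proper finite extension, i.e. find a monic irreducible $\pi \in \mathbf{F}_p[x]$ with $\pi \nmid g$ and residue field $\mathbf{F}_p[x]/(\pi) \cong \mathbf{F}_{p^{\deg\pi}}$ of small cardinality. To control $\deg\pi$ I would use the classical identity $x^{p^k}-x = \prod_\pi \pi$, the product being over all monic irreducibles of degree dividing $k$, which has degree $p^k$. Choose $k$ minimal with $p^k > r$; then $p^{k-1} \le r$, so $p^k \le p\,r$. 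If every monic irreducible of degree dividing $k$ divided $g$, their (pairwise coprime) product $x^{p^k}-x$ would divide $g$, forcing $p^k \le \deg g = r$, a contradiction. Hence some monic irreducible $\pi$ with $\deg\pi \mid k$ (in particular $\deg\pi \le k$) does not divide $g$.

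Assembling the chain $S \lra \mathbf{F}_p[x] \lra \mathbf{F}_p[x]/(\pi) \cong \mathbf{F}_{p^{\deg\pi}}$ we get $f \mapsto g \mapsto \overline g \ne 0$ with target a field of size $p^{\deg\pi} \le p^k \le p\,r < p\,d^{2s+1}$, so $\D_S(f) \le p\,d^{2s+1}$. It then remains to reshape this into the stated bound: for $d\ge 2$ (the case $\deg f = 0$, where $g$ is a nonzero constant and $\D_S(f) \le p$, being immediate) one has $\log(p\,d^{2s+1}) = \log p + (2s+1)\log d \le \frac{2s+2}{\log 2}\,(\log p)(\log d)$, hence $\D_S(f) \le d^{C\log p}$ with $C = (2s+2)/\log 2$ depending only on $s$. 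The only real obstacle is the second paragraph: because the residue fields of $\mathbf{F}_p[x]$ have size a power of $p$, the polynomial-in-$d$ degree bound on $g$ necessarily inflates to a bound of shape $d^{O(\log p)}$, and the irreducible-polynomial count is exactly what keeps that exponent linear in $\log p$ rather than worse.
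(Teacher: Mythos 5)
Your proof is correct, and its skeleton matches the paper's: reduce to one variable via Lemma \ref{LemmaZ}, locate a low-degree monic irreducible not dividing $g$, and compose $S \to \mathbf{F}_p[x] \to \mathbf{F}_p[x]/(\pi)$. Where you genuinely diverge is in how that irreducible is produced. The paper fixes a target degree $\ell \approx C'\log d$ and invokes Gauss's count $I_\ell(p)=\frac{1}{\ell}\sum_{e\mid\ell}\mu(e)p^{\ell/e}$, together with the asymptotic consequence $I_\ell(p)\ge p^{\ell/2}$ for large $\ell$, to argue there are more irreducibles of that degree than $g$ has factors; the quotient field then has size $p^{C'\log d}=d^{C'\log p}$, which is directly the stated shape. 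You instead use the exact identity $x^{p^k}-x=\prod_{\deg\pi\mid k}\pi$ with $k$ minimal so that $p^k>\deg g$, which is more elementary (no M\"obius count, no ``sufficiently large $\ell$'' caveat) and quantitatively sharper: it yields $\D_S(f)\le p\,d^{2s+1}$, polynomial in $d$ with $p$ entering only linearly, which you then deliberately weaken via $\log p+(2s+1)\log d\le \frac{2s+2}{\log 2}(\log p)(\log d)$ to recover $d^{C\log p}$. Two trivial loose ends, shared with the paper: the case $\deg g=0$ (possible even when $\deg f>0$, e.g.\ if some exponent $n_i=0$) makes your minimal-$k$ choice degenerate, but is immediate since no irreducible divides a nonzero constant and $\mathbf{F}_p[x]/(x)$ already serves; and when $d=1$ the stated bound itself degenerates, an edge case the lemma implicitly ignores in both arguments.
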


\begin{proof}
Set $g \in \mathbf{F}[x]$ to be a single variable polynomial obtained by Lemma \ref{LemmaZ}.
Then $g(x)$ is not the zero polynomial and $\deg(g) = r \leq d^{2s+1}$. Let $I_\ell(p)$ be the number of monic irreducible polynomials in $\B{F}_p[x]$ of degree equal to $\ell$. By a well-known result of Gauss (see for instance \cite[Corollary 9.2.3]{Rom}), we have
\[ I_\ell(p) = \frac{1}{\ell} \sum_{d \mid \ell} \mu(d)p^{\ell/d}, \]
where
\[ \mu(d) = \begin{cases} 1, & d = 1 \\ (-1)^k, & d = p_1\dots p_k,~p_j \text{ are distinct primes}, \\ 0, & \text{otherwise}. \end{cases} \]
In particular, for large values of $\ell$, we have that
\[ \frac{1}{2\ell} p^{\ell} \leq I_\ell(p) \leq 2 \frac{1}{\ell}p^\ell. \]
Hence $I_\ell(p) \geq p^{\ell/2}$ for sufficiently large $\ell$. This inequality in tandem with 
\[ \deg(g) \leq d^{2s+1} \] 
gives that there exists some polynomial $h \in I_{C' \log(d)}(p)$ where $h$ does not divide $g$ and $C'$ only depends on $s$. The quotient $\B{F}_p[x]/(h)$ has order less than or equal to $p^{C' \log(d)}$, and so we are done.
\end{proof}

\section{Proof of Theorem \ref{PolyBound}}

Before diving into the proof of Theorem \ref{PolyBound}, we give a brief sketch of the argument: for a finitely generated group, $\Gamma$, the field generated by the coefficients of the matrices over $\Q$ or $\mathbf{F}_p$ is finitely generated and so is a finite extension of a transcendental extension of some finite transcendence degree. Applying restriction of scalars (or corestriction), we can, at the cost of increasing the size of the matrices, assume the extension is purely transcendental. The coefficient ring generated over $\Z$ or $\mathbf{F}_p$ is then the ring $S$ from the previous section but with finitely many elements inverted. For a non-trivial element (which is represented by a matrix), we simply apply Lemma \ref{LemmaA} or \ref{LemmaB} to a non-trivial entry of that element after a scaling procedure. The map on matrices induced by the map of rings then provides us with a small finite quotient of $\Gamma$ that verifies the non-triviality of the given word. We then are able to write down bounds after relating the word length of the non-trivial word with the complexity of the non-trivial entry. With the sketch behind us, it is now time to dive.

\begin{proof}[Proof of Theorem \ref{PolyBound}]

Given a finitely generated group $\Gamma$ in $\GL(m,K)$ for an infinite field $K$, we select a finite generating set $X$ for $\Gamma$ and suppose further that the set $X$ generates $\Gamma$ as a monoid. 
For a given non-trivial word $\gamma \in \Gamma$, there is some coefficient $\gamma_{j,k} \in K$ that separates $\gamma$ from the identity matrix. The first step in our proof follows that of Mal'cev \cite{M40}.
Namely, we show that it suffices to restrict attention to  a subring of $K$ that is more tenable. 
To that end, let $L$ be the field generated by the (finitely many) entries that appear in the elements of $X$.
By construction, $L$ is finitely generated over $\Q$ or $\mathbf{F}_p$.
Moreover, the field $L$ is a finite extension of $\Q(T)$ or $\mathbf{F}_p(T)$, where $T = \set{x_1,\dots,x_s}$ is a transcendental basis (see, for instance, \cite[Corollary 3.3.3]{Rom}). By choosing a finite basis for $L$ as a vector space over $\Q(T)$ or $\mathbf{F}_p(T)$, we can embed $L$ into $\textrm{Mat}([L:\Q(T)],\Q(T))$ or $\textrm{Mat}([L:\mathbf{F}_p(T)],\mathbf{F}_p(T))$. Applying this embedding on the coefficients of the matrices in $\GL(m,L)$, we can view $\Gamma<\GL(m,L) < \GL(M,\Q(T))$ or $\GL(M,\mathbf{F}_p(T))$, where $M=m[L:\Q(T)]$ or $m[L:\mathbf{F}_p(T)]$. For each generator $\gamma_i \in X$ and each matrix coefficient $(\gamma_i)_{j,k}$, we have a finite number of elements in $\Z[T]$ or $\mathbf{F}_p[T]$ that are inverted; these are the elements in the denominators of the matrix coefficients of the generators. Ranging over all the generators and all of the matrix coefficients, we see that $\Gamma<\GL(M,S')$, where $S'$ is obtained from $S=\Z[T]$ or $\mathbf{F}_p[T]$, with a finite number of inverted elements. Note that in the case of $\Z[T]$, we, if necessary, invert some coefficients of $\Z$ along with some polynomials in these extended coefficients and so the ring is of the form $\Z[1/p_1,\dots,1/p_u][T]$ with a finite number of inverted primes in the coefficients and a finite number of inverted polynomials. In either the case of $\Z$ or $\mathbf{F}_p$, there exists $\Phi(T) \in S$ such that for each generator $\gamma_j$, $\Phi(T)I_M \gamma_j \in \GL(M,S)$. To obtain $\Phi(T)$, we can simply take the product of all of the denominators occurring in the coefficients $(\gamma_i)_{j,k}$. In the case of $\Z$, we, if necessary, multiply this product by some fixed integer to ensure the coefficients of the resulting polynomials are integer valued and also ensure that all of the primes in $\Z$ that are inverted are also in the product; this last demand will be useful later. We will continue throughout to denote by $S'$, the above ring with $\Gamma < \GL(M,S')$ and $\Phi(T) \in S$ such that $\Phi(T)I_M\gamma_j \in \GL(M,S)$. We further note that every unit in $S'$ can be generated multiplicatively by the various factors of $\Phi(T)$.

Let $w$ be the word that gives $\gamma$ in terms of the generators $X$. We will instead consider $A = \gamma - I_M$. Since $X$ generates $\Gamma$ as a monoid and $\Phi(T)I_M$ is central, we have that 
\[ w(\Phi(T) X) =  (\Phi(T))^{\norm{\gamma}_X} w(X). \] 
Hence, we can scale $A$ by $(\Phi(T))^{\norm{\gamma}_X}I_M$ so that the resulting element is in $\textrm{Mat}(M,S)$. An off-diagonal coefficient of $(\Phi(T))^{\norm{\gamma}_X}I_MA$ will be of the form
\[ ((\Phi(T))^{\norm{\gamma}_X}I_MA)_{i,j} = (\Phi(T))^{\norm{\gamma}_X}\gamma_{i,j}. \]
For any ring homomorphism $\vp\colon S' \to R$ where $R$ is a finite ring with identity and with
\[ \vp\pr{(\Phi(T))^{\norm{\gamma}_X}\gamma_{i,j}} \ne 0, \]
we must have $\vp(\gamma_{i,j}) \ne 0$ since $\Phi(T)$ is a unit in $S'$. The diagonal coefficients of $(\Phi(T))^{\norm{\gamma}_X}I_MA$ have the form
\[ ((\Phi(T))^{\norm{\gamma}_X}I_MA)_{i,i} = (\Phi(T))^{\norm{\gamma}_X}\gamma_{i,i} - (\Phi(T))^{\norm{\gamma}_X}. \]
For any ring homomorphism $\vp\colon S' \to R$ where $R$ is a finite ring with identity and with
\[ \vp\pr{(\Phi(T))^{\norm{\gamma}_X}\gamma_{i,i} - (\Phi(T))^{\norm{\gamma}_X}} \ne 0, \]
we must have
\[ \vp(\gamma_{i,i}-1) \ne 0 \]
since again $\Phi(T)$ is a unit in $S'$. In either case, the homomorphism
\[ \rho\colon \GL(M,S') \lra \GL(M,R) \]
induced by $\vp\colon S' \to R$ will have $\rho(\gamma) \ne 1$. Therefore, it suffices to find a ring homomorphism $\vp\colon S' \to R$ that does not kill all of the coefficients of $(\Phi(T))^{\norm{\gamma}_X}I_MA$. For this task, since these coefficients are in $S$, we can apply Lemma \ref{LemmaA} or \ref{LemmaB}. Note that since those lemmas have target rings $R$ that are finite fields and built into our assumptions, the image of $\Phi(T)$ must be non-zero (hence a unit), these homomorphisms for $S\to R$ extend to homomorphisms of $S' \to R$; this is why we insisted that $\Phi(T)$ involve enough units to generate the group of units of $S'$.

Let $A'$ be a non-zero coefficient of $(\Phi(T))^{\norm{\gamma}_X}I_MA$. In order to obtain quantified results, we must relate the word length of $\gamma$ to the degree of the $A'$. In the event $S= \Z[T]$, we must relate the word length of $\gamma$ to the maximum coefficients occurring in $A'$ as well. For the maximum coefficient control, it is straightforward to see that there exists a constant $\alpha$, depending on the generating set $X$ such that
\[ \alpha_{i,j} < \alpha^{\norm{\gamma}_X} \]
where $\alpha_{i,j}$ is maximum of the absolute values of the coefficients of $A'$; this fact was used previously in \cite{B10}. For the required degree control, there exists a constant $C_1$ that depends only on the generating set $X$ such that
\[ \deg(A') < C_1\norm{\gamma}_X. \]
The reason is identical to the coefficient control except now degree is additive under multiplication, thus yielding linear control opposed to exponential control. Note that this control on degree holds over both $\Z$ and $\mathbf{F}_p$. With these relationships established, we press forward, separating into two cases again. 

\textbf{Case 1.} $A' \in \Z[T]$.

By Lemma \ref{LemmaA}, we can find a map of $\Z[T]$ to a finite field $R$ with $\abs{R} \leq C(\log (\alpha^{\norm{\gamma}_X^2}) + \norm{\gamma}_X^{2s+2})$. Note that in using Lemma \ref{LemmaA}, we need control on the coefficients of 
\[ g(x)=A'(x^{n_1},x^{n_2},\dots,x^{n_s}), \]
where $n_i \leq \deg(A')^s$.
However, the maximum coefficient appearing in $g(x)$ is certainly no bigger than $\alpha^{\norm{\gamma}_X^2}$. So regardless of $A'$ being constant, we get an induced map of $\GL(M,S') \to \GL(M,R)$ has order at most $\abs{R}^{M^2}$. Since the coefficient $A'$ is not zero, the image of $\gamma$ is not trivial and so
\[ \D_{\Gamma}(\gamma) < C' \norm{\gamma}_X^{(2s+2)M^{2}}. \]

\textbf{Case 2.} $A' \in \B{F}_p[T]$.

The beginning of this case follows that of Case 1 with Lemma \ref{LemmaB} playing the role of Lemma \ref{LemmaA} (notice that the assumptions are slightly different). 
By increasing $C_1$ to a new constant $C_2$ (which depends only on $X$) we have
\[ \deg(A') + 1 < C_2 || \gamma ||_X \]
and so we are in a situation where Lemma \ref{LemmaB} applies. In either case, we obtain a field quotient of $S'$ to $R$ where $A'$ is not zero and $\abs{R} < C'\norm{\gamma}_X^{C'\log p}$ for a constant $C'$ depending on only on $X$ and $\abs{T}$. The induced map from $\GL(M,S') \to \GL(M,R)$ has order at most $\abs{R}^{M^2} <(C')^{M^2}\norm{\gamma}_X^{C'M^2\log p}$. As $\gamma$ is nontrivial under this homomorphism, we see that
\[ \D_{\Gamma}(\gamma) < C\norm{\gamma}_X^{CM^2} \]
for some constant $C$ independent of $\gamma$. In particular, in each case, we have
\[ \D_{\Gamma}(\gamma) < C\norm{\gamma}_X^{d} \]
for constants $d,C$ independent of $\gamma$. 

In both cases, we obtain the upper bound
\[ \Farb_{\Gamma,X}(n) \preceq n^d \]
for some constant $d$, as mandated by the theorem.
\end{proof}

\section{Proof of Theorem \ref{FiniteTheorem}}\label{FiniteSection}

We proceed via contradiction and assume that $\Gamma$ is infinite. Specifically, fixing a generating set $X$ for $\Gamma$, we assume both that $\Gamma$ is infinite and the inequality
\begin{equation}\label{6}
\Farb_{\Gamma,X}(n) \preceq \log\log(n)
\end{equation}
holds. With the aim of establishing a contradiction, we first note that
\begin{equation}\label{7}
n \preceq \word_{\Gamma,X}(n).
\end{equation}
Second, we have the basic inequality
\begin{equation}\label{8}
\log \word_{\Gamma,X}(n) \preceq \sub_\Gamma(\Farb_{\Gamma,X}(n)) \log \Farb_{\Gamma,X}(n)
\end{equation}
established in \cite[Equation 1]{BM10}. Note that this inequality holds for all generating sets $X$. Third, we have (see \cite[Proposition 2.8]{LS})
\begin{equation}\label{9}
\log \sub_\Gamma(n) \preceq (\log(n))^2.
\end{equation}
In total, these inequalities yield the following string
\begin{align*}
\log\log n &\preceq \log\log \word_{\Gamma,X}(n) \\
&\preceq \log(\sub_{\Gamma} (\Farb_{\Gamma,X}(n)) + \log\log \Farb_{\Gamma,X}(n)\\
&\preceq \left(\log(\Farb_{\Gamma,X}(n))\right)^2 + \log\log \Farb_{\Gamma,X}(n) \\
&\preceq (\log\log\log(n))^2,
\end{align*}
which is clearly impossible.\qed

As mentioned in the introduction, if $\Gamma$ contains an element of infinite order, according to \cite[Lemma 1.1, Theorem 2.2]{B10}, we have $\log(n) \preceq \Farb_{\Gamma,X}(n)$. Thus, the question of whether or not the above bound is optimal concerns only infinite, residually finite, torsion groups.

\begin{ques}\label{TorsionQues}
Does there exist an infinite, residually finite, torsion group $\Gamma$ with strict asymptotic inequalities
\[ \log \log (n) \prec \Farb_{\Gamma,X} \prec \log(n). \]
\end{ques}

One can certainly provide better lower bounds for $\Farb_{\Gamma,X}(n)$. If $x=x(n)=\log \Farb_{\Gamma,X}(n)$, we see from above that
\[ \log \log n \preceq x^2 + \log x. \]
In particular, so long as
\[ \limsup_{n \to \iny} \frac{x^2}{\log \log n} = 0, \]
we would derive a contradiction. Thus, we have:

\begin{sch}\label{FiniteImprovement}
If
\[ \limsup_{n \to \iny} \frac{(\log \Farb_{\Gamma,X}(n))^2}{\log \log n} = 0, \]
then $\Gamma$ is finite. In particular, $e^{\sqrt{\log \log n}} \preceq \Farb_{\Gamma,X}(n)$ if $\Gamma$ is infinite.
\end{sch}

An example of a faster growing function that satisfies the condition of Scholium \ref{FiniteImprovement} is
\[ \Farb(n) = (\log \log n)^{(\log \log \log n)^r}, \]
where $r>0$ is a fixed constant. However, we do not know of any examples of infinite, residually finite groups with strict asymptotic inequality $\Farb_{\Gamma,X}(n) \prec \log n$ and so feel Question \ref{TorsionQues} is interesting regardless of the lower bound on growth.

\section{Final remarks}

There is geometric motivation for our work here and in \cite{B10,B11,BM10,BM11}. For instance, let $\Gamma$ be the fundamental group of a closed $n$--manifold $M$ which admits a metric of negative curvature. We have a bijection between conjugacy classes in $\Gamma$ with closed geodesics on $M$. Moreover, by the \u{S}varc--Milnor Lemma, this bijection is bi-Lipschitz with respect to word and geodesic lengths. The function $\D_\Gamma(\gamma)$ provides the degree of the smallest regular cover where the geodesic corresponding to $\gamma$ fails to lift. By Theorem \ref{PolyBound}, the existence of a faithful linear representation affords one control over how big this degree can be as a function of the length of the geodesic. In addition, lower bounds on the function $\Farb_{\Gamma,X}$ give upper bounds on how quickly one can increase the systole of $M$ in finite regular covers. The growth threshold result, Theorem \ref{FiniteTheorem}, gives a uniform lower bound on the degree of the regular covers where a geodesic fails to lift. Moreover, results like Gromov's systolic inequality preclude one from growing the systole too quickly in finite covers, and the Girth inequality in \cite[Equation 2]{BM11} is analogous to a systolic inequality given the discussion here. It seems plausible that our work could be employed in systolic problems, though the fundamental group of the manifold would have more stringent restrictions than one might typically impose for these geometric problems. Consequently, the implementation of this ideology would likely only produce novel geometric results. We view this philosophical connection to be of greater interest.




\begin{thebibliography}{9}

\bibitem[B10]{B10}
K.~Bou-Rabee, \emph{Quantifying residual finiteness}, J. Algebra \textbf{323} (2010), 729--737.

\bibitem[B11]{B11}
K.~Bou-Rabee, \emph{Approximating a group by its solvable quotients}, New York J. Math. \textbf{17} (2011), 699--712.

\bibitem[BK12]{BK12}
 K.~Bou-Rabee and T.~Kaletha, \emph{Quantifying residual finiteness of arithmetic groups}, Compos. Math. \textbf{148} (2012), 907--920.

\bibitem[BM10]{BM10}
K.~Bou-Rabee and D.~B.~McReynolds, \emph{Bertrand's postulate and subgroup growth}, J. of Algebra \textbf{324} (2010) 793--819

\bibitem[BM11]{BM11}
K.~Bou-Rabee and D.~B.~McReynolds, \emph{Asymptotic growth and least common multiples in groups}, Bull. Lond. Math. Soc. \textbf{43} (2011), 1059--1068.

\bibitem[KM11]{KM}
M.~Kassabov and F.~Matucci, \emph{Bounding the residual finiteness of free groups} Proc. Amer. Math. Soc. \textbf{139} (2011), 2281--2286.

\bibitem[LS03]{LS}
A.~Lubtozky and D.~Segal, \emph{Subgroup growth}, Birkh\"{a}user 2003.

\bibitem[M40]{M40} A. I.~Mal'cev, \emph{On the faithful representation of infinite groups by matrices}, Mat. SS. (N.S.) \textbf{50} (1940), 405--422.

\bibitem[Rom95]{Rom}
S.~Roman, \emph{Field theory}, Springer--Verlag, 1995.

\end{thebibliography}
\end{document}